\theoremstyle{plain}
\newtheorem*{thm*}{Theorem}
\newtheorem*{prop*}{Proposition}
\newtheorem*{conj*}{Conjecture}
\newtheorem{thmintro}{Theorem} %Environnement pour les théorèmes de l'introduction
\newtheorem{propintro}[thmintro]{Proposition}
\newtheorem{lemmeintro}[thmintro]{Lemma}
\theoremstyle{definition}
\newtheorem*{defn*}{Definition}
\newtheorem*{question*}{Question}
\theoremstyle{remark}
\newtheorem*{rem*}{Remark}
\author{Miguel Acosta}
\author{Jean-Marc Schlenker}
\address{Department of mathematics, 
	Université du Luxembourg, 
	Maison du Nombre, 
	6, Avenue de la Fonte, 
	L-4364 Esch-sur-Alzette, 
	Luxembourg.}
\email{miguel.acosta@normalesup.org , jean-marc.schlenker@uni.lu}
\thanks{Both authors were partially supported by FNR grant AGoLoM OPEN/16/11405402} 
\title{
	%Pascal's theorem and the concurrence of angle bisectors \\
A hyperbolic proof of Pascal's Theorem}
\date{\today}  % Activate to display a given date or no date
\begin{document}
\maketitle

%\tableofcontents

\begin{abstract}
We provide a simple proof of Pascal's Theorem on cyclic hexagons, as well as a generalization by Möbius, using hyperbolic geometry.
\end{abstract}

\section{Pascal's Theorem}

Blaise Pascal (1623--1662) is a towering intellectual figure of the XVII\textsuperscript{th} century. He is credited with inventing and building the first mechanical calculator, the \emph{Pascaline}, and with laying the foundations of probability theory, in particular in his correspondence with Fermat -- he came up for instance with Pascal's triangle. He is also known for his work on hydrostatics and Pascal's law (as well as the invention of the syringe), and for discovering the variation of air pressure with altitude -- the SI unit of pressure is called the Pascal. However, he was most influential in his time as a philosopher and theologian, and well-known for ``Pascal's wager''.

Pascal was raised and educated by his father, Etienne Pascal, who had a strong interest in the intellectual developments of his time and was an active member of a group of scientists meeting around Martin Mersenne, including Desargues, Descartes and others. According to contemporary sources \cite[p. 176]{paris1834historiettes}, Blaise was extraordinarily precocious, and so passionate in studying mathematics that, when he was 11, his father forbid him to read any mathematics book before he turned 15 and knew latin and greek. Blaise therefore continued studying geometry by himself and in secret and, at 16, published his first treatise on projective geometry, the \emph{Traité sur les coniques}, which contained the following remarkable statement.

\begin{thmintro}[Pascal's Theorem]\label{thm:pascal}
	Let $ABCDEF$ be a cyclic hexagon. Let $X$ be the intersection point of $AB$ and $DE$, $Y$ the intersection point of $BC$ and $EF$ and $Z$ the intersection point of $CD$ and $FA$.
	Then $X$, $Y$ and $Z$ are aligned.
\end{thmintro}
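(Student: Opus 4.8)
Here is how I would attack the statement.

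\smallskip

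The plan is to transport the whole configuration to the ideal boundary of the hyperbolic plane and turn the collinearity into a statement about hyperbolic reflections. Since Pascal's assertion is projective, I first apply a projective transformation of $\mathbb{RP}^2$ carrying the circle through $A,\dots,F$ onto the circle at infinity $\partial\HH^2$ of the Klein (projective) model; then $A,\dots,F$ are ideal points of $\HH^2$ and each chord of the configuration extends to a complete geodesic. To the point $X=AB\cap DE$ I attach the involution $\sigma_X$ of $\mathbb{RP}^2$ that fixes $X$ and preserves the circle, i.e. the harmonic homology with centre $X$ and axis the polar line $\ell_X$ of $X$; when $X$ lies outside the circle — which is automatic for a convex cyclic hexagon, whose opposite sides are then disjoint chords — $\ell_X$ is a geodesic and $\sigma_X$ is simply the hyperbolic reflection in $\ell_X$. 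I will use only two features of $\sigma_X$: (a) it interchanges the two points in which any line through $X$ meets the circle, hence swaps $A\leftrightarrow B$ and $D\leftrightarrow E$; and (b) it fixes the line $\ell_X$ pointwise. Define $\sigma_Y,\ell_Y$ and $\sigma_Z,\ell_Z$ analogously from $Y=BC\cap EF$ and $Z=CD\cap FA$.

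\smallskip

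The heart of the argument is the composition $\Phi:=\sigma_Z\circ\sigma_Y\circ\sigma_X$. By (a), $\sigma_X$ sends $A$ to $B$, then $\sigma_Y$ sends $B$ to $C$, then $\sigma_Z$ sends $C$ to $D$, so $\Phi(A)=D$; symmetrically $\Phi(D)=A$. Thus $\Phi$ interchanges the two distinct ideal points $A$ and $D$. Now I invoke a small but decisive observation: a projective transformation of a projective line that interchanges two distinct points is necessarily an involution — if a matrix maps $\RR p$ into $\RR q$ and $\RR q$ into $\RR p$, its square preserves both lines, hence is scalar. (In hyperbolic language: a glide reflection, like a hyperbolic translation, a parabolic, or a rotation of angle $\neq\pi$, fixes its fixed points at infinity and interchanges no pair of boundary points.) Therefore $\Phi$ is an involution. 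Two consequences: $\Phi=\Phi^{-1}$, i.e. $\sigma_Z\sigma_Y\sigma_X=\sigma_X\sigma_Y\sigma_Z$; and, being an involution of $\mathbb{RP}^2$ preserving the circle, $\Phi$ is itself a harmonic homology, $\Phi=\sigma_L$, with some centre $L$ and axis the polar line $L^{*}$.

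\smallskip

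It remains to conclude that $X,Y,Z$ are collinear. By the polarity of the circle — an involutive, incidence-reversing correlation of $\mathbb{RP}^2$ sending $X,Y,Z$ to $\ell_X,\ell_Y,\ell_Z$ — this is equivalent to the concurrence of $\ell_X,\ell_Y,\ell_Z$. From $\Phi=\sigma_X\sigma_Y\sigma_Z$ I get $\sigma_Y\sigma_Z=\sigma_X\Phi=\sigma_X\sigma_L$. By (b), the transformation $\sigma_Y\sigma_Z$ fixes $\ell_Y\cap\ell_Z$ while the equal transformation $\sigma_X\sigma_L$ fixes $\ell_X\cap L^{*}$; since these are one and the same transformation, and both of these points are its distinguished fixed point — its centre of rotation, the pole of its translation or reflection axis, or its parabolic fixed point, according to type — they coincide. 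Hence $\ell_Y\cap\ell_Z=\ell_X\cap L^{*}$ lies on $\ell_X$, so $\ell_X,\ell_Y,\ell_Z$ pass through a common point, and Pascal's Theorem follows.

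\smallskip

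The one step I expect to require genuine (though routine) care is that last identification: that the distinguished fixed point of $\sigma_Y\sigma_Z$ really is $\ell_Y\cap\ell_Z$. This should amount to separating the possible types of $\sigma_Y\sigma_Z$ — elliptic, hyperbolic or parabolic, or orientation-reversing when exactly one of $X,Y,Z$ lies inside the circle, so that one of the $\sigma$'s is a half-turn rather than a reflection — and checking the claim in each. Because every step is written in terms of circle-preserving projective involutions, the argument applies to an arbitrary, not necessarily convex, cyclic hexagon; alternatively, collinearity of $X,Y,Z$ is a polynomial identity in a rational parameter on the circle, hence forced by the convex case. The same mechanism — an alternating product of $2k+1$ such involutions that transposes two of the vertices, hence is an involution — should in turn yield Möbius's generalisation to $(4k+2)$-gons inscribed in a conic, possibly after an induction on $k$ reducing it to the hexagon case.
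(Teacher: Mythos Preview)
Your argument is correct and takes a genuinely different route from the paper's. Both proofs begin identically—pass to the Klein model and use the circle's polarity to trade collinearity of $X,Y,Z$ for concurrence of their polar lines $\ell_X,\ell_Y,\ell_Z$—but then diverge. The paper brings in the three long diagonals $AD,BE,CF$, proves a short lemma that the common perpendicular to $AB$ and $DE$ is precisely the \emph{angle bisector} of $AD$ and $BE$, and thereby identifies $\ell_X,\ell_Y,\ell_Z$ with the three internal bisectors of the hyperbolic triangle cut out by $AD,BE,CF$; concurrence is then the existence of the incenter. You instead study the word $\Phi=\sigma_Z\sigma_Y\sigma_X$ in the polar reflections, track ideal points to get $\Phi(A)=D$, $\Phi(D)=A$, invoke the $PGL_2$ fact that a projective map of a line swapping two points is an involution, and then extract concurrence from the structure of products of two reflections. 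The paper's proof is shorter and more visual—once the bisector lemma is seen, everything collapses to a fact about triangles everyone knows—while yours is more group-theoretic and does not need the diagonals to form a nondegenerate triangle. The case-check you flag at the end does go through: the type of $\sigma_Y\sigma_Z$ (elliptic, parabolic, hyperbolic) is read off from whether $\ell_Y\cap\ell_Z$ lies inside, on, or outside the circle, and in each case that intersection is the \emph{unique} fixed point in that region, so the same description applied to $\sigma_X\sigma_L$ forces $\ell_X\cap L^{*}=\ell_Y\cap\ell_Z$; for non-convex configurations your polynomial-identity closure is the cleanest finish.
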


 The main goal of this note is to provide a simple proof, based on hyperbolic geometry, of Pascal's Theorem.
 The statement has a natural setting in the projective plane. Instead of considering a cyclic hexagon, one then considers a hexagon with vertices on a conic. 
Any non-degenerate conic is projectively equivalent to a circle, while the statements for degenerate conics can be obtained by a limiting argument where a hexagon with vertices on a degenerate conic is obtained as a limit of hexagons with vertices on non-degenerate conics.

 By considering different permutations of the set of vertices, the configuration of Pascal's theorem leads to a fascinating set of $95$ lines and $95$ points with a beautiful incidence structure. We recommend the articles of Conway and Ryba \cite{conway_pascal_2012} and \cite{conway_extending_2013} describing the full configuration, that they call \emph{mysticum hexagrammaticum}.
 On the one hand, this name describes their approach to label the lines and points by permutations of the set $\{A,B,C,D,E,F\}$, giving a natural way to state incidence relations and be able to identify symmetries.
 On the other hand, it recalls the original name of \Cref{thm:pascal}, given by Pascal in $1639$: \emph{hexagrammum mysticum}. 
 The original work from Pascal did not survive, but there are many proofs of \Cref{thm:pascal}, using a wide variety of tools.
 A proof using algebraic geometry is sketched by Conway and Ryba in \cite{conway_pascal_2012}. Other proofs involve cross-ratios and symmetries of the projective plane, as in \cite{richter-gebert_perspectives_2011} or Euclidean lengths and Menalaus's theorem like in \cite[p.77]{coxeter_geometry_1967}.

 The proof given here, based on hyperbolic geometry, is not really novel (it can be deduced easily from \cite[page 436]{richter-gebert_perspectives_2011}) but it brings to light a striking link between Pascal's theorem and elementary hyperbolic geometry. A recent and similar link is exhibited by Drach and Schwarz in \cite{drach_hyperbolic_2019}, where they revisit the seven-circles theorem, in Euclidean geometry, in terms of hyperbolic geometry.
 Even if those links between hyperbolic geometry and results on projective or Euclidean geometry do not lead to novel results, they give a beautiful perspective to them.

\section{A hyperbolic statement of Pascal's Theorem}

\begin{figure}
	\centering
		\definecolor{ffwwqq}{rgb}{1.,0.4,0.}
	\definecolor{uuuuuu}{rgb}{0.26666666666666666,0.26666666666666666,0.26666666666666666}
	\definecolor{xdxdff}{rgb}{0.49019607843137253,0.49019607843137253,1.}
	\definecolor{ududff}{rgb}{0.30196078431372547,0.30196078431372547,1.}
	
	\begin{tikzpicture}[line cap=round,line join=round,>=triangle 45,x=0.6410596026490066cm,y=0.6258620689655187cm]
	\clip(-1.5760229989258046,-3.212008370019429) rectangle (10.903315844049402,6.374768489484682);
	\draw [line width=2.pt] (4.3,0.36) ellipse (2.1020890799132075cm and 2.052255071553201cm);
	\draw [line width=2.pt,domain=-1.5760229989258046:10.903315844049402] plot(\x,{(--7.2640852513663035-0.1075620419111516*\x)/1.9456548318297386});
	\draw [line width=2.pt,domain=-1.5760229989258046:10.903315844049402] plot(\x,{(--8.868770541090159-0.8380085220898956*\x)/1.2542878957246923});
	\draw [line width=2.pt,domain=-1.5760229989258046:10.903315844049402] plot(\x,{(--15.05480042766225-1.912699216734097*\x)/0.8633209249891065});
	\draw [line width=2.pt,domain=-1.5760229989258046:10.903315844049402] plot(\x,{(--25.040932907927658-3.5566663014167315*\x)/-2.726990826090552});
	\draw [line width=2.pt,domain=-1.5760229989258046:10.903315844049402] plot(\x,{(-24.26311760914553--6.185773207979004*\x)/-1.9662938951230569});
	\draw [line width=2.pt,domain=-1.5760229989258046:10.903315844049402] plot(\x,{(--1.4282045234870022--0.22916287417287196*\x)/0.6300210686700716});
	\draw [line width=2.pt,color=ffwwqq,domain=-1.5760229989258046:10.903315844049402] plot(\x,{(--20.229281814672568--13.282843146001653*\x)/32.32755488086449});
	\draw [line width=2.pt,color=ffwwqq,domain=-1.5760229989258046:10.903315844049402] plot(\x,{(--26.671592364514925-2.2194503947361612*\x)/5.676689522244635});
	\draw [line width=2.pt,color=ffwwqq,domain=-1.5760229989258046:10.903315844049402] plot(\x,{(--246.26175552769604-37.512633539084945*\x)/20.54566170582546});
	\begin{scriptsize}
	\draw [fill=ududff] (3.5,3.54) circle (2.5pt);
	\draw[color=ududff] (3.4322414638841128,3.953280886178902) node {$A$};
	\draw [fill=xdxdff] (5.445654831829739,3.4324379580888484) circle (2.5pt);
	\draw[color=xdxdff] (5.564472868842791,3.738404853121051) node {$B$};
	\draw [fill=xdxdff] (6.699942727554431,2.594429435998953) circle (2.5pt);
	\draw[color=xdxdff] (6.820671215950229,2.895429646509483) node {$C$};
	\draw [fill=xdxdff] (7.5632636525435375,0.6817302192648558) circle (2.5pt);
	\draw[color=xdxdff] (7.845464604379981,0.7797271671706448) node {$D$};
	\draw [fill=xdxdff] (4.836272826452985,-2.8749360821518755) circle (2.5pt);
	\draw[color=xdxdff] (5.2,-3.04) node {$E$};
	\draw [fill=xdxdff] (2.8699789313299284,3.310837125827128) circle (2.5pt);
	\draw[color=xdxdff] (2.7,2.9) node {$F$};
	\draw [fill=uuuuuu] (5.882065019427815,4.406448589522651) circle (2.0pt);
	\draw[color=uuuuuu] (5.944638158098989,4.779727167170635) node {$Z$};
	\draw [fill=uuuuuu] (9.500434197236755,3.208276744646777) circle (2.0pt);
	\draw[color=uuuuuu] (9.680175348181635,2.961545348988822) node {$X$};
	\draw [fill=uuuuuu] (2.126393451783468,5.650086181438344) circle (2.0pt);
	\draw[color=uuuuuu] (2.2421588192560136,5.9202230349392275) node {$Y$};
	\end{scriptsize}
	\end{tikzpicture}
	\caption{The configuration of Pascal's theorem (\Cref{thm:pascal}) and \Cref{prop:hexagon_orthogonals}.}
\end{figure}

We are going to use the Klein model of the hyperbolic plane.
Consider an open disk $\Delta$ on the projective plane, bounded by a circle $\Gamma$.
The \emph{hyperbolic plane} is defined as $\Delta$, endowed with the
\emph{Hilbert distance}, defined as follows
\begin{defn*}
	Let $P,Q \in \Delta$, and $A,B$ be the intersection points of the line $PQ$ with $\Gamma$. Then
	\[ d(P,Q) = \frac{1}{2} \log \left(\frac{BP}{BQ} \cdot \frac{AQ}{AP}\right) .\]
\end{defn*}
This distance induces a complete Riemannian metric on $\Delta$, and thus notions of angles and lengths. A key property of this Hilbert distance is that it is invariant under projective transformations that leave $\Delta$ invariant. The geodesics are precisely the straight lines, but the angles are not the Euclidean ones. The circle $\Gamma$ is then the \emph{boundary at infinity of the hyperbolic plane}, and its points are called \emph{ideal points}.
The model is very rich, but we focus here only on a few points related to the polarity in the projective plane and orthogonality in the hyperbolic plane. 

Given a circle in the projective plane, one can define a polarity relation between points and lines: there is a \emph{polar line} for each point, and a \emph{pole} for each line. By definition, given points $P \in \Delta$ and $Q\not \in \bar\Delta$, $Q$ is in the polar line of $P$, and conversely, if and only if
$$ \frac{BP}{BQ} \cdot \frac{AQ}{AP} =-1~, $$
where $A$ and $B$ are again the intersections of the line $PQ$ with $\Gamma$. We will denote the polar line of $P$ by $P^*$. The polarity relation is also invariant under projective transformations that leave $\Delta$ invariant.

It follows from the definition that if $P,Q\in \Delta$, then $P$ and $Q$ are both in the polar line of $P^*\cap Q^*$, the intersection point of the polar lines $P^*$ and $Q^*$. It follows that the line $PQ$ is the polar line of $P^*\cap Q^*$. As a consequence, the polarity relation preserves the incidence: three points are aligned if and only if their polar lines are concurrent.

 This last fact allows to have \emph{dual} statements.
 For example, the dual statement of Pascal's theorem is Brianchon's theorem: if a conic is inscribed in a hexagon, then the three diagonals joining the opposite vertices of the hexagon are concurrent.

 Back to hyperbolic geometry, the link between polarity and the Klein model that we are going to use is the following:
\begin{prop*}
	Two lines $l_1$ and $l_2$ in the hyperbolic plane are orthogonal if and only if the pole of $l_1$ is contained in the extension of $l_2$ to the projective plane.
\end{prop*}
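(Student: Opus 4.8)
The plan is to reduce the statement to the special case of two \emph{diameters} of $\Delta$ — lines through the centre $O$ — where both sides of the equivalence are transparent, and then to transport the general case to this one by a hyperbolic isometry. The first point is that in both directions the two lines necessarily meet inside $\Delta$. If $l_1$ and $l_2$ are orthogonal this is clear, since orthogonality of two geodesics presupposes a common point. Conversely, suppose the pole $P$ of $l_1$ lies on the projective extension of $l_2$, and let $A$ and $B$ be the two ideal endpoints of the chord $l_2$. By the cross-ratio characterisation of polarity recalled above, a point $R$ of the line $l_2$ lies on $P^{*}$ (the projective extension of $l_1$) exactly when $R$ is the harmonic conjugate of $P$ with respect to $\{A,B\}$; and since $P$ lies strictly outside $\bar\Delta$, hence outside the segment $[A,B]$, its harmonic conjugate $R$ lies strictly between $A$ and $B$, i.e. $R\in\Delta$. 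As $P$ is not incident to $l_1$ (a point lies on its polar only when it lies on $\Gamma$), the lines $l_1$ and $l_2$ are distinct and $R$ is their unique common point; so in either case $p:=l_1\cap l_2\in\Delta$.

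The second step uses that the isometry group of the hyperbolic plane acts transitively on $\Delta$: I would pick a projective transformation $\varphi$ preserving $\Delta$ with $\varphi(p)=O$. Being an isometry, $\varphi$ preserves angles, hence orthogonality; being a projective transformation preserving $\Delta$, it preserves the polarity relation and carries the pole of $l_i$ to the pole of $\varphi(l_i)$. Since $\varphi(l_1)$ and $\varphi(l_2)$ both pass through $O$, it is enough to prove the proposition when $l_1$ and $l_2$ are diameters.

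For diameters the proposition is elementary. Normalising $\Gamma$ to the unit circle, the Riemannian metric of the Klein model at $O$ is a positive multiple of the Euclidean one, so two diameters are hyperbolically orthogonal if and only if they are Euclidean-orthogonal. On the other hand, the diameter carried by the projective line $ux+vy=0$ has for pole the point at infinity in the direction $(u,v)$, which is precisely the point at infinity of the pencil of lines Euclidean-perpendicular to it; hence the pole of a diameter $d$ lies on the projective extension of a diameter $d'$ if and only if $d'$ is Euclidean-perpendicular to $d$. Chaining the two equivalences proves the claim for diameters, and applying $\varphi^{-1}$ returns the general statement.

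The main thing requiring care is the harmonic-conjugate argument guaranteeing that $l_1$ and $l_2$ really do cross inside $\Delta$, together with the bookkeeping when $l_1$ passes through $O$, so that its pole is a point at infinity and ``contained in the extension of $l_2$'' has to be read projectively. If one prefers to avoid the harmonic conjugate, the same intersection fact can be checked in coordinates by placing $l_1$ on the chord $\{x=a\}$ with $0\le a<1$: its pole is then $(1/a,0)$ — or the point at infinity of the $x$-axis when $a=0$ — and one notes that a line through this pole meets $\Delta$ exactly when it meets the chord $\{x=a\}$ inside $\Delta$.
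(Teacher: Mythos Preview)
Your argument is correct and follows essentially the same route as the paper: move the intersection of $l_1$ and $l_2$ to the centre of $\Delta$ by a projective transformation preserving $\Delta$, and verify the statement there for diameters. The one genuine addition in your write-up is that you justify, via the harmonic-conjugate argument, that $l_1$ and $l_2$ actually meet inside $\Delta$ under either hypothesis---the paper simply speaks of ``the intersection of $l_1$ and $l_2$'' and leaves this point implicit.
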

In particular, if $l_1$ and $l_2$ are two lines that do not intersect in the hyperbolic plane, they have a unique common perpendicular, that is the polar line of their intersection point in the projective plane.

The proposition follows from the projective invariance of the hyperbolic metric and of the polarity relation under projective transformations leaving $\Delta$ invariant, since one can always find such a projective transformation leaving $\Delta$ invariant and bringing the intersection of $l_1$ and $l_2$ to the center of $\Delta$ -- in this case the proposition is easy to check. 

We can now restate Pascal's theorem in terms of hyperbolic geometry. Considering the polar lines $l_1$, $l_2$ and $l_3$ of the points $X$, $Y$ and $Z$ of the statement of \Cref{thm:pascal}, we obtain the following equivalent statement.

\begin{propintro}\label{prop:hexagon_orthogonals}
	Let $ABCDEF$ be an ideal hyperbolic hexagon. 
	Let $l_1$ be the common perpendicular to $AB$ and $DE$, $l_2$ the common perpendicular to $BC$ and $EF$ and $l_3$ the common perpendicular to $CD$ and $FA$.
	Then $l_1$, $l_2$ and $l_3$ are concurrent.
\end{propintro}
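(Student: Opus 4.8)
The plan is to use the dictionary between orthogonality and polarity set up above to turn \Cref{prop:hexagon_orthogonals} into a bare collinearity statement, and then to verify that statement by a short computation in adapted homogeneous coordinates. For the reduction: by the proposition relating perpendicularity and polarity (and the remark following it), the common perpendicular of two ultraparallel geodesics is the polar line of their intersection point in the projective plane, so the pole of $l_1$ is $AB\cap DE$, the pole of $l_2$ is $BC\cap EF$, and the pole of $l_3$ is $CD\cap FA$. Since polarity reverses incidence, three lines are concurrent if and only if their poles are collinear; hence \Cref{prop:hexagon_orthogonals} is equivalent to the assertion that $AB\cap DE$, $BC\cap EF$ and $CD\cap FA$ are collinear — which is, as it must be, precisely Pascal's conclusion, now to be proved directly.

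For the computation, I would pick homogeneous coordinates in which $\Gamma$ is the conic $\{y^2=xz\}$, so that its points are $\gamma(t)=[1:t:t^2]$ (together with $[0:0:1]$). A one-line cross-product computation shows that the chord through $\gamma(t_X)$ and $\gamma(t_Y)$ has line coordinates proportional to $\sigma_{XY}:=(t_Xt_Y,\,-(t_X+t_Y),\,1)$, and therefore the intersection point of two chords $\overline{XY}$ and $\overline{ZW}$ is represented by $\sigma_{XY}\times\sigma_{ZW}$. Writing $t_A,\dots,t_F$ for the parameters of $A,\dots,F$, the collinearity above becomes
\[ \det\!\big(\sigma_{AB}\times\sigma_{DE},\ \sigma_{BC}\times\sigma_{EF},\ \sigma_{CD}\times\sigma_{FA}\big)=0. \]
I would expand the left-hand side using the identity $\det(a\times b,\,c\times d,\,e\times f)=\det(c,d,f)\det(a,b,e)-\det(c,d,e)\det(a,b,f)$, which rewrites it as a difference of two products of $3\times3$ determinants of the vectors $\sigma_{\bullet\bullet}$.

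Each determinant that appears has the shape $\det(\sigma_{PQ},\sigma_{RS},\sigma_{TU})$ with the feature that two of its three rows share a common letter while the third is disjoint from them; a short (essentially Vandermonde) calculation shows that such a determinant factors as $\pm$ a product of three differences $t_i-t_j$ of the six parameters — for instance $\det(\sigma_{AB},\sigma_{DE},\sigma_{CD})=(t_E-t_C)(t_A-t_D)(t_D-t_B)$. Substituting these factorizations, the two products of determinants turn out to be the \emph{same} product of six differences $t_i-t_j$, with opposite signs, so the determinant vanishes; this yields the collinearity, hence \Cref{prop:hexagon_orthogonals}. The step I expect to be the crux is exactly this last cancellation — it is only a couple of lines once the $3\times3$ determinants are factored, but it is where the ``miracle'' lives — and the one genuine choice to be made is the parametrization $\sigma_{XY}=(t_Xt_Y,-(t_X+t_Y),1)$, which is precisely what forces those determinants to factor into differences. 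Two remarks to include at the end: one may instead parametrize $\Gamma$ by arc length, $\gamma(\theta)=[\cos\theta:\sin\theta:1]$, in which case $\det(\gamma(\alpha),\gamma(\beta),\gamma(\mu))=4\sin\tfrac{\beta-\alpha}{2}\sin\tfrac{\mu-\beta}{2}\sin\tfrac{\mu-\alpha}{2}$, and after the same expansion the identity collapses to the three-term Grassmann--Plücker relation $\sin\tfrac{a-c}{2}\sin\tfrac{f-e}{2}+\sin\tfrac{a-e}{2}\sin\tfrac{c-f}{2}=\sin\tfrac{a-f}{2}\sin\tfrac{c-e}{2}$ for the planar unit vectors $(\cos\tfrac\theta2,\sin\tfrac\theta2)$; and, as usual, the pairs of opposite sides must be ultraparallel for $l_1,l_2,l_3$ to be honest geodesics, with ``concurrent'' to be read as ``belonging to a common pencil'' when the common point of the projective extensions falls outside $\Delta$.
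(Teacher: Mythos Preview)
Your argument is correct: the polarity reduction back to Pascal's theorem is exactly the inverse of what the paper does in Section~2, and your algebraic verification of Pascal's theorem via the parametrization $\sigma_{XY}=(t_Xt_Y,-(t_X+t_Y),1)$ goes through cleanly --- the four $3\times 3$ determinants factor as you claim (e.g.\ $\det(\sigma_{BC},\sigma_{EF},\sigma_{FA})=(t_A-t_E)(t_B-t_F)(t_C-t_F)$), and the two products coincide.

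However, your route is essentially the \emph{opposite} of the paper's. The paper's proof never touches Pascal's theorem: it introduces the three main diagonals $AD$, $BE$, $CF$, shows via a reflection argument (\Cref{lm:quadrilateral}) that each $l_i$ is an angle bisector of the triangle they form, and then invokes the concurrency of angle bisectors in a hyperbolic triangle (\Cref{thm:bisectors}). This is the whole point of the note --- to deduce Pascal's theorem \emph{from} hyperbolic geometry, not the reverse. Your proof first undoes the hyperbolic reformulation and then gives a standard projective-algebraic proof of Pascal; logically sound, but it makes the hyperbolic setting decorative rather than load-bearing. What your approach buys is self-containedness (no auxiliary lemmas, no appeal to the bisector theorem) and a proof that works uniformly without case analysis on whether the diagonals are concurrent; what the paper's approach buys is a synthetic, computation-free argument that actually explains \emph{why} the three perpendiculars meet --- they are the bisectors of a single triangle.
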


\section{Proof of Proposition \ref{prop:hexagon_orthogonals}}

The proof of Proposition  \ref{prop:hexagon_orthogonals} is based on the hyperbolic version of an elementary statement on triangles.

\begin{thmintro}\label{thm:bisectors}
	Let $PQR$ be a triangle. Then the angle bisectors of $PQR$ are concurrent.
\end{thmintro}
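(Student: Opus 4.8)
The plan is to transpose the classical Euclidean proof to the hyperbolic plane, checking at each step that the ingredient invoked is still available there; in fact every step is a statement of neutral (absolute) geometry, so the argument covers the Euclidean case at the same time. Concretely, I would prove the sharper statement that the three \emph{internal} bisectors of $PQR$ pass through one point (the incenter), by way of the description of an internal bisector as an equidistant locus, and finish by transitivity. Throughout, write $d(M,\ell)$ for the distance from a point $M$ to a line $\ell$, realised by the foot of the perpendicular dropped from $M$ to $\ell$.

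The first step is the key lemma: \emph{if $M$ lies in the interior of the angle at a vertex $V$ whose sides are carried by lines $\ell$ and $\ell'$, then $M$ lies on the internal bisector at $V$ if and only if $d(M,\ell)=d(M,\ell')$}. To prove it, one drops the perpendiculars $MH$ onto $\ell$ and $MH'$ onto $\ell'$ and compares the right triangles $VMH$ and $VMH'$, which share the hypotenuse $VM$: if $M$ is on the bisector the angles at $V$ agree, so the two triangles are congruent (common side $VM$, two further pairs of equal angles) and $MH=MH'$; conversely, if $MH=MH'$ the two triangles are congruent by the hypotenuse--leg criterion, so $VM$ makes equal acute angles with $\ell$ and with $\ell'$ and therefore lies along one of the two bisectors at $V$, and since $M$ is interior to the angle this must be the internal one, not the external one. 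Both congruence criteria used here hold in the hyperbolic plane just as in the Euclidean plane. The second step is an incidence fact: the internal bisector at $P$ issues from $P$ into the interior of the triangle, hence meets the opposite side $QR$ by the crossbar theorem, and likewise the internal bisector at $Q$ meets $PR$; two cevians issued from distinct vertices cross at a point $I$ in the interior of the triangle, by the same betweenness argument as in Euclidean geometry.

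The last step is transitivity. Since $I$ lies on the internal bisector at $P$, the lemma gives $d(I,PQ)=d(I,PR)$; since it lies on the internal bisector at $Q$, it gives $d(I,QP)=d(I,QR)$; hence $d(I,PR)=d(I,QR)$. As $I$ is interior to the triangle it is interior to the angle at $R$, so the converse half of the lemma, applied at $R$, places $I$ on the internal bisector at $R$, and the three internal bisectors concur at $I$. I do not anticipate a genuine obstacle: the one thing that needs attention is simply that each classical ingredient --- the congruence theorems for right triangles, the crossbar theorem, and the crossing of two cevians --- is invoked in its hyperbolic form, and that in the converse half of the lemma the hypothesis ``$M$ interior to the angle'' is precisely what selects the internal bisector. (Running the same scheme with one internal and two external bisectors gives, in the same way, the concurrency of that triple at an excenter, a variant one may prefer when feeding this result into \Cref{prop:hexagon_orthogonals}.)
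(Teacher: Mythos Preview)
Your proof is correct and follows exactly the approach sketched in the paper's remark after the theorem: characterise the bisector as the equidistant locus from the two sides, intersect two bisectors, and use transitivity of the equal-distance relation to capture the third. You have simply filled in the details (congruence of right triangles, crossbar theorem, interiority) that the paper leaves implicit.
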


\begin{rem*}
  \Cref{thm:bisectors} holds for Euclidean, spherical and hyperbolic triangles. The proof is, in all three cases, elementary. 
  A point of a triangle is in the bisector of an angle if and only if it is at equal distance from the two corresponding edges. Therefore, the intersection point of two angle bisectors is at equal distance from all three edges, and is therefore contained in the third bisector.
  
  %The intersection with the triangle $PQR$ of the angle bisector at $P$ is the set of points of $PQR$ at equal distance from $PQ$ and from $PR$. Similarly the intersection with $PQR$ of the bisector at $Q$ is the set of points of $PQR$ at equal distance from $PQ$ and $QR$. Therefore the intersection between those two lines is at equal distance from $PQ$ and from $PR$, and also from $PQ$ and from $QR$. It is therefore at equal distance from $QR$ and from $PR$, and therefore contained in the bisector at $R$.\footnote{Alternate way to write the proof: A point of a triangle is in the bisector of an angle if and only if it is at equal distance from the two corresponding edges. Therefore, the intersection between two bisectors is at equal distance from all three edges, and is therefore contained in the third bisector.}
\end{rem*}

\begin{lemmeintro} \label{lm:quadrilateral}
	Let $ABDE$ be an ideal hyperbolic quadrilateral. 
	Then, the common orthogonal to $AB$ and $DE$ is the angle bisector of the lines $AD$ and $BE$.
\end{lemmeintro}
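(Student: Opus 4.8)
The plan is to realize the common orthogonal of $AB$ and $DE$ as the axis of a single reflection that interchanges $A$ with $B$ and $D$ with $E$, and then to read off the statement from the description of angle bisectors as equidistant loci used in the remark above. To set up, note that since the ideal points $A,B,D,E$ occur on $\Gamma$ in this cyclic order, the chords $AB$ and $DE$ are disjoint in $\bar\Delta$; hence $AB$ and $DE$ are ultraparallel and, as recalled in the previous section, admit a unique common perpendicular $l$. On the other hand $\{A,D\}$ separates $\{B,E\}$ on $\Gamma$, so the chords $AD$ and $BE$ meet at a point $O\in\Delta$; it is at $O$ that the angle bisector in the statement is formed.

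The key step concerns the hyperbolic reflection $R$ fixing $l$ pointwise. Since $l\perp AB$, the line $R(AB)$ passes through the foot $F=l\cap AB$ and is perpendicular to $R(l)=l$, so $R(AB)=AB$; moreover $R$ restricts on the geodesic $AB$ to an isometry fixing $F$ which is not the identity — otherwise $R$ would fix the two distinct lines $l$ and $AB$ pointwise and be trivial — hence it is the reflection of $AB$ about $F$, which exchanges the two ideal endpoints of $AB$. Thus $R(A)=B$, $R(B)=A$, and for exactly the same reason $R(D)=E$, $R(E)=D$.

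To finish, from $R(A)=B$ and $R(D)=E$ one gets $R(AD)=BE$, so that for every $M\in l$
\[ d(M,AD)=d\bigl(R(M),R(AD)\bigr)=d(M,BE). \]
Hence $l$ is contained in the locus of points equidistant from the lines $AD$ and $BE$, which, just as in the remark preceding this lemma, is the union of the two angle bisectors of $AD$ and $BE$ at $O$; being a geodesic, $l$ is one of them (and in particular passes through $O$). The only points needing care are the verification that $R$ swaps rather than fixes the ideal endpoints of $AB$ and of $DE$ — handled above — and the bookkeeping of which of the two bisectors at $O$ is obtained; neither is a genuine obstacle, the whole argument resting on the orthogonality–polarity dictionary established in the previous section.
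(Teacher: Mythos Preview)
Your proof is correct and rests on the same key observation as the paper's: the reflection in the relevant line swaps $A\leftrightarrow B$ and $D\leftrightarrow E$. The only difference is the direction of the argument --- the paper starts from the angle bisector and shows it is perpendicular to $AB$ and $DE$, whereas you start from the common perpendicular and show it lies in the equidistant locus of $AD$ and $BE$; your version is also more careful in justifying that the reflection genuinely swaps (rather than fixes) the ideal endpoints, a point the paper leaves implicit.
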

\begin{proof}
	Let $l$ be the angle bisector of the lines $AD$ and $BE$. The hyperbolic reflection on $l$ exchanges $A$ and $B$, so the line $AB$ is preserved by the reflection. Thus, the angle bisector of the two lines is orthogonal to $AB$. Similarly, $l$ is also perpendicular to $DE$, so it is precisely the common perpendicular to $AB$ and $DE$.
\end{proof}

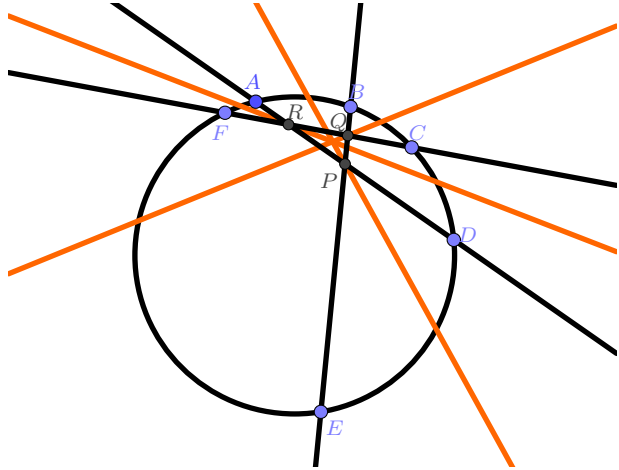
\begin{figure}
	\centering
	\definecolor{uuuuuu}{rgb}{0.26666666666666666,0.26666666666666666,0.26666666666666666}
\definecolor{ffwwqq}{rgb}{1.,0.4,0.}
\definecolor{xdxdff}{rgb}{0.49019607843137253,0.49019607843137253,1.}
\definecolor{ududff}{rgb}{0.30196078431372547,0.30196078431372547,1.}
\begin{tikzpicture}[line cap=round,line join=round,>=triangle 45,x=0.6410596026490066cm,y=0.6410596026490066cm]
\clip(-1.5760229989258046,-4.005396799771493) rectangle (10.903315844049402,5.581380059732617);
\draw [line width=2.pt] (4.3,0.36) circle (2.1020890799132075cm);
\draw [line width=2.pt,color=ffwwqq,domain=-1.5760229989258046:10.903315844049402] plot(\x,{(--20.229281814672568--13.282843146001653*\x)/32.32755488086449});
\draw [line width=2.pt,color=ffwwqq,domain=-1.5760229989258046:10.903315844049402] plot(\x,{(--26.671592364514925-2.2194503947361612*\x)/5.676689522244635});
\draw [line width=2.pt,color=ffwwqq,domain=-1.5760229989258046:10.903315844049402] plot(\x,{(--246.26175552769604-37.512633539084945*\x)/20.54566170582546});
\draw [line width=2.pt,domain=-1.5760229989258046:10.903315844049402] plot(\x,{(--24.387897562577127-2.858269780735144*\x)/4.0632636525435375});
\draw [line width=2.pt,domain=-1.5760229989258046:10.903315844049402] plot(\x,{(--32.25611599216289-6.307374040240724*\x)/-0.6093820053767534});
\draw [line width=2.pt,domain=-1.5760229989258046:10.903315844049402] plot(\x,{(--14.7364613031635-0.7164076898281753*\x)/3.8299637962245026});
\begin{scriptsize}
\draw [fill=ududff] (3.5,3.54) circle (2.5pt);
\draw[color=ududff] (3.4322414638841128,3.953280886178902) node {$A$};
\draw [fill=xdxdff] (5.445654831829739,3.4324379580888484) circle (2.5pt);
\draw[color=xdxdff] (5.564472868842791,3.738404853121051) node {$B$};
\draw [fill=xdxdff] (6.699942727554431,2.594429435998953) circle (2.5pt);
\draw[color=xdxdff] (6.820671215950229,2.895429646509483) node {$C$};
\draw [fill=xdxdff] (7.5632636525435375,0.6817302192648558) circle (2.5pt);
\draw[color=xdxdff] (7.845464604379981,0.7797271671706448) node {$D$};
\draw [fill=xdxdff] (4.836272826452985,-2.8749360821518755) circle (2.5pt);
\draw[color=xdxdff] (5.118191877107254,-3.2) node {$E$};
\draw [fill=xdxdff] (2.8699789313299284,3.310837125827128) circle (2.5pt);
\draw[color=xdxdff] (2.7710844390907243,2.9) node {$F$};
\draw [fill=uuuuuu] (5.331568880679512,2.2515977621329006) circle (2.0pt);
\draw[color=uuuuuu] (5.0,1.9) node {$P$};
\draw [fill=uuuuuu] (5.388393631118646,2.8397591331621577) circle (2.0pt);
\draw[color=uuuuuu] (5.2,3.110305679567334) node {$Q$};
\draw [fill=uuuuuu] (4.171994936126382,3.067290625717895) circle (2.0pt);
\draw[color=uuuuuu] (4.291745596115518,3.3417106382450195) node {$R$};
\end{scriptsize}
\end{tikzpicture}
	\caption{The triangle $PQR$ and its hyperbolic angle bisectors.}
\end{figure}

\begin{proof}[Proof of Proposition \ref{prop:hexagon_orthogonals}]
  If the diagonals $AD$, $BE$ and $CF$ are concurrent  at a point $P$, then \Cref{lm:quadrilateral} implies that $l_1$, $l_2$ and $l_3$ contain $P$, and are therefore concurrent.
  
  Now, suppose that the diagonals $AD$, $BE$ and $CF$ are not concurrent. We call $P$ the intersection point of $BE$ and $CF$, $Q$ the intersection point of $AD$ and $CF$, and $R$ the intersection point of $AD$ and $BE$.

  It follows from Lemma \ref{lm:quadrilateral} that the angle bisector of $PQR$ at $P$ is the common perpendicular $l_1$ to $AB$ and $DE$, while the angle bisector of $PQR$ at $Q$ is the common perpendicular $l_2$ to $BC$ and $EF$, and the angle bisector of $PQR$ at $R$ is the common perpendicular $l_3$ to $CD$ and $FA$.

  By Theorem \ref{thm:bisectors} applied to the triangle $PQR$, the lines $l_1$, $l_2$ and $l_3$ are concurrent, and the result follows.
\end{proof}

\section{The Möbius generalization}

In 1847, Möbius proved a generalization of Pascal's Theorem for $(4n+2)$-gons \cite{moebius_verallgemeinerung_1848}.

\begin{thmintro}[Möbius, 1847]
	Let $A_1 A_2 \cdots A_{4n+2}$ be a cyclic $(4n+2)$-gon. Let $X_1 , \dots , X_{2n+1}$ be the intersection points of the pairs of opposite sides of $A_1 A_2 \cdots A_{4n+2}$.
	If $X_1 , \dots , X_{2n}$ are aligned, then $X_{2n+1}$ lies in the same line as $X_1 , \dots , X_{2n}$.
\end{thmintro}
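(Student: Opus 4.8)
The plan is to run, for the $(4n+2)$-gon, the two steps that took Pascal's Theorem to \Cref{prop:hexagon_orthogonals} and then to its proof. First I would pass to the Klein model exactly as in the hyperbolic reformulation of Pascal's Theorem above: the vertices $A_1,\dots ,A_{4n+2}$ become the ideal vertices of an ideal $(4n+2)$-gon, and taking polar lines turns each point $X_i=A_iA_{i+1}\cap A_{i+2n+1}A_{i+2n+2}$ (all indices read modulo $4n+2$) into the common perpendicular $l_i:=X_i^{*}$ of the pair of opposite sides $A_iA_{i+1}$ and $A_{i+2n+1}A_{i+2n+2}$. Since polarity preserves incidence, the hypothesis ``$X_1,\dots ,X_{2n}$ lie on a line $\ell$'' becomes ``$l_1,\dots ,l_{2n}$ all pass through the point $O:=\ell^{*}$'', and the conclusion ``$X_{2n+1}\in\ell$'' becomes ``$O\in l_{2n+1}$''. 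Depending on $\ell$, the point $O$ lies inside $\Delta$, on $\Gamma$, or outside $\bar\Delta$; so the goal is the hyperbolic statement: \emph{if the geodesics $l_1,\dots ,l_{2n}$ all pass through a point $O$ of the projective plane, then so does $l_{2n+1}$.}

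Next I would introduce the $2n+1$ long diagonals $d_i:=A_iA_{i+2n+1}$, with indices running modulo $2n+1$ since $d_{i+2n+1}=d_i$. Applying \Cref{lm:quadrilateral} to the ideal quadrilateral $A_iA_{i+1}A_{i+2n+1}A_{i+2n+2}$, whose two diagonals are exactly $d_i$ and $d_{i+1}$, identifies $l_i$ with the bisector of $d_i$ and $d_{i+1}$; moreover the argument in the proof of that lemma shows that the hyperbolic reflection $\sigma_i$ across $l_i$ exchanges the ideal points $A_i\leftrightarrow A_{i+1}$ and $A_{i+2n+1}\leftrightarrow A_{i+2n+2}$. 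Working with these reflections rather than with an ``equidistance'' argument is essential: of the two perpendicular bisectors of $d_i$ and $d_{i+1}$, only this coherent one is the common perpendicular of the two opposite sides, and the conclusion genuinely depends on the choice.

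The heart of the proof would then be a product-of-reflections computation, in the spirit of the proof of \Cref{prop:hexagon_orthogonals}, replacing the appeal to \Cref{thm:bisectors} by the classification of products of two reflections in $\HH^2$. Put $\tau:=\sigma_{2n}\circ\cdots\circ\sigma_1$; a one-line induction from the exchanges above gives $\tau(A_1)=A_{2n+1}$ and $\tau(A_{2n+2})=A_{4n+2}$, so in particular $\tau\neq\mathrm{id}$. Consider $\rho:=\sigma_{2n+1}\circ\tau$: since $\sigma_{2n+1}$ exchanges $A_{2n+1}\leftrightarrow A_{2n+2}$ and $A_{4n+2}\leftrightarrow A_1$, we get $\rho(A_1)=A_{2n+2}$ and $\rho(A_{2n+2})=A_1$, so the orientation-reversing isometry $\rho$ swaps the two distinct ideal endpoints of $d_1$. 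An orientation-reversing isometry that swaps two distinct ideal points is a reflection --- a glide reflection of nonzero translation length fixes each of its two ideal endpoints and swaps no pair of ideal points --- so $\rho=\sigma_m$ for some geodesic $m$, whence $\tau=\sigma_{2n+1}\circ\sigma_m$. On the other hand, the hypothesis says $l_1,\dots ,l_{2n}$ all pass through $O$, i.e. they form a pencil: concurrent at $O$ if $O\in\Delta$, all ending at $O$ if $O\in\Gamma$, all orthogonal to the geodesic $O^{*}$ if $O\notin\bar\Delta$. Accordingly $\tau$, a product of an even number of reflections from this pencil and not the identity, is a nontrivial rotation about $O$, resp. parabolic fixing $O$, resp. translation along $O^{*}$. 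Writing $\tau=\sigma_{2n+1}\circ\sigma_m$ and comparing with its nature then forces $l_{2n+1}$ to lie in the same pencil --- two reflections compose to a rotation about a point of $\Delta$ only if their axes meet there, to a parabolic only if their axes share the corresponding ideal point, and to a translation only if both axes are orthogonal to its axis --- that is, $O\in l_{2n+1}$. Undoing the polarity, $X_{2n+1}$ lies on $\ell$, as desired.

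I expect the main difficulty to be the bookkeeping in the two middle steps: one must use exactly the bisector produced by \Cref{lm:quadrilateral} and then track the induced permutation of $\{A_1,\dots ,A_{4n+2}\}$ carefully, since the statement is false for the wrong bisector. The closing pencil argument is then routine, resting only on the classification of the orientation-preserving isometries of $\HH^2$ and of products of two reflections. Finally, one checks that when $n=1$ the hypothesis ``$X_1,\dots ,X_{2n}$ are aligned'' is vacuous, the long diagonals are $A_1A_4$, $A_2A_5$ and $A_3A_6$, and the computation collapses to a reflection-theoretic variant of the proof of \Cref{prop:hexagon_orthogonals}, so this is genuinely an extension of the hyperbolic proof above.
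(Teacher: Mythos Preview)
Your argument is correct, and it takes a genuinely different route from the paper's. After the common first two steps --- the polarity reformulation and the identification via \Cref{lm:quadrilateral} of each $l_i$ with the ``inner'' bisector of the long diagonals $d_i,d_{i+1}$ --- the paper proceeds by an \emph{equidistance} argument: the concurrence point $P$ of $l_1,\dots ,l_{2n}$ is equidistant from all the $m_i$, hence equidistant from $m_{2n+1}$ and $m_1$, and a separate parity lemma on signed half-planes (defining regions $R_i$ by $m_im_{i+1}<0$) is then needed to place $P$ on the correct one of the two bisectors, namely $l_{2n+1}$. You replace this by a \emph{product-of-reflections} argument: tracking the ideal vertices under $\sigma_{2n}\circ\cdots\circ\sigma_1$ and showing the full product $\sigma_{2n+1}\circ\cdots\circ\sigma_1$ swaps the endpoints of $d_1$, hence is a reflection, so that the factorization $\tau=\sigma_{2n+1}\circ\sigma_m$ together with the classification of products of two reflections forces $l_{2n+1}$ into the pencil through $O$.

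Each approach has its payoff. The paper's sign lemma makes transparent \emph{why} $4n+2$ is needed: the product of an even number of strict inequalities $m_im_{i+1}<0$ yields $m_{2n+1}m_1>0$, exactly the region condition for $R_{2n+1}$. Your approach encodes the same parity in the orientation-reversing nature of the $(2n+1)$-fold product and in the vertex bookkeeping that singles out the correct bisector, and it has the pleasant feature of treating uniformly the three cases $O\in\Delta$, $O\in\Gamma$, $O\notin\bar\Delta$ via the elliptic/parabolic/hyperbolic trichotomy, whereas the equidistance phrasing literally uses a point of $\HH^2$. The two proofs are close cousins --- reflections across the bisectors versus distances to the diagonals --- but they are not the same proof.
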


By considering the polar lines $l_1, \dots , l_{2n+1}$ of $X_1, \dots , X_{2n+1}$, we obtain the corresponding hyperbolic statement, for which the proof of \Cref{prop:hexagon_orthogonals} extends easily.

\begin{propintro}
	Let $A_1 A_2 \cdots A_{4n+2}$ be a hyperbolic ideal $(4n+2)$-gon. Let $l_1 , \dots , l_{2n+1}$ be the common perpendicular to the pairs of opposite sides of $A_1 A_2 \cdots A_{4n+2}$.
	If $l_1 , \dots , l_{2n}$ are concurrent, then the common intersection point belongs also to $l_{2n+1}$.
\end{propintro}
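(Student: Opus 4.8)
The plan is to follow the proof of \Cref{prop:hexagon_orthogonals}: first reduce to a statement about the long diagonals of the polygon, and then, since these diagonals now form an odd polygon rather than a triangle, replace the use of \Cref{thm:bisectors} by a short computation with compositions of reflections. Index the vertices cyclically modulo $4n+2$, so that the $k$-th pair of opposite sides is $\{A_kA_{k+1},\,A_{k+2n+1}A_{k+2n+2}\}$, and for $j=1,\dots,2n+1$ set $d_j=A_jA_{j+2n+1}$. The four ideal points $A_j,A_{j+1},A_{j+2n+1},A_{j+2n+2}$ occur in this cyclic order on the circle at infinity, so they span an ideal quadrilateral to which \Cref{lm:quadrilateral} applies: the line $l_j$ is the angle bisector of its two diagonals $d_j=A_jA_{j+2n+1}$ and $d_{j+1}=A_{j+1}A_{j+2n+2}$, and — inspecting the proof of that lemma — the reflection $\sigma_j$ across $l_j$ exchanges $A_j\leftrightarrow A_{j+1}$ and $A_{j+2n+1}\leftrightarrow A_{j+2n+2}$. (Indices of the diagonals are read modulo $2n+1$, so $d_{2n+2}=d_1$; for $j=2n+1$ one applies \Cref{lm:quadrilateral} to the pair of sides $A_{2n+1}A_{2n+2}$ and $A_{4n+2}A_1$.)

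Orient each $d_j$ from the ideal point $A_j$ to the ideal point $A_{j+2n+1}$. For $j=1,\dots,2n$, the reflection $\sigma_j$ carries the oriented line $d_j$ onto the oriented line $d_{j+1}$, since it sends $A_j\mapsto A_{j+1}$ and $A_{j+2n+1}\mapsto A_{j+2n+2}$. For $j=2n+1$, however, $\sigma_{2n+1}$ sends $A_{2n+1}\mapsto A_{2n+2}$ and $A_{4n+2}\mapsto A_1$, so it carries the oriented line $d_{2n+1}$ onto $d_1$ \emph{with the reversed orientation}. Consequently the composition
\[
T=\sigma_{2n+1}\circ\sigma_{2n}\circ\dots\circ\sigma_1
\]
fixes the geodesic $d_1$ as a set and interchanges its two ideal endpoints.

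Now suppose that $l_1,\dots,l_{2n}$ pass through a common point $O$. Each of $\sigma_1,\dots,\sigma_{2n}$ then fixes $O$, so $g:=\sigma_{2n}\circ\dots\circ\sigma_1$ is an orientation-preserving isometry fixing $O$, i.e. a rotation centered at $O$, and it is non-trivial because it maps $d_1$ onto the distinct line $d_{2n+1}$. Write $g=\rho_2\circ\rho_1$ with $\rho_1,\rho_2$ reflections across two distinct lines through $O$. On the other hand $T=\sigma_{2n+1}\circ g$ is an orientation-reversing isometry that preserves $d_1$ and swaps its endpoints; it therefore has a fixed point on $d_1$, hence is not a glide reflection, hence is the reflection $\rho_0$ across the perpendicular to $d_1$ at that fixed point. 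Thus
\[
\sigma_{2n+1}=T\circ g^{-1}=\rho_0\circ\rho_1\circ\rho_2 .
\]
Since $\sigma_{2n+1}$ is a genuine reflection (across $l_{2n+1}$), the classical fact that a product of three reflections of the hyperbolic plane is again a reflection precisely when the three axes belong to a common pencil shows that the axes of $\rho_0,\rho_1,\rho_2$ lie in such a pencil; as the axes of $\rho_1$ and $\rho_2$ are distinct lines through $O$, that pencil is the pencil of all lines through $O$, and therefore the axis $l_{2n+1}$ of $\sigma_{2n+1}$ passes through $O$, which is the assertion. (Alternatively, in the hyperboloid model $\RR^{2,1}$: if $v_j$ is a unit spacelike vector dual to $d_j$, co-oriented as above, then the dual of $l_j$ is $v_j+v_{j+1}$ for $j\le 2n$ while the dual of $l_{2n+1}$ is $v_{2n+1}-v_1$; representing $O$ by a timelike vector $p$ with $\langle p,v_j+v_{j+1}\rangle=0$ for $j\le 2n$ yields $\langle p,v_j\rangle=(-1)^{j-1}\langle p,v_1\rangle$, hence $\langle p,v_{2n+1}-v_1\rangle=0$ since $2n$ is even, i.e. $O\in l_{2n+1}$.)

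The point that requires care — and the only place where the parity of $4n+2$, i.e. the oddness of the number $2n+1$ of long diagonals, is used — is the orientation bookkeeping of the second paragraph: with a consistent orientation of the diagonals, the $2n$ reflections $\sigma_1,\dots,\sigma_{2n}$ all advance the orientation while $\sigma_{2n+1}$ reverses it, so that the total composition $T$ flips $d_1$. Everything else follows from \Cref{lm:quadrilateral} together with the standard classification of plane hyperbolic isometries.
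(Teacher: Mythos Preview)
Your proof is correct, but it takes a genuinely different route from the paper's. Both arguments start from \Cref{lm:quadrilateral} to identify each $l_j$ with the appropriate angle bisector of the long diagonals $d_j=A_jA_{j+2n+1}$ and $d_{j+1}$, and both isolate the parity of $2n+1$ as the crucial point. From there the paper argues metrically: it introduces the regions $R_i$ (the pair of opposite quarters cut out by $m_i=d_i$ and $m_{i+1}=d_{i+1}$ that contain the two opposite sides), observes that $l_i$ is the locus in $R_i$ equidistant from $m_i$ and $m_{i+1}$, and reduces the problem to the elementary sign-counting Lemma showing that $P\in R_1\cap\dots\cap R_{2n}$ forces $P\in R_{2n+1}$. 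Your argument instead works in the isometry group: the orientation bookkeeping on the $d_j$'s shows that $T=\sigma_{2n+1}\circ\dots\circ\sigma_1$ flips $d_1$, hence is a reflection, and then writing the non-trivial rotation $g^{-1}$ as $\rho_0\circ\sigma_{2n+1}$ forces the axis $l_{2n+1}$ through $O$. (The cleanest way to finish is exactly the step you implicitly use: a product of two reflections equals a non-trivial rotation about $O$ only if both axes pass through $O$.) Your parenthetical $\RR^{2,1}$ computation is in fact the direct linear-algebra translation of the paper's sign lemma: the alternation $\langle p,v_j\rangle=(-1)^{j-1}\langle p,v_1\rangle$ is precisely the chain of inequalities $m_i m_{i+1}<0$. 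The paper's approach is lighter on prerequisites (no classification of isometries, just equidistance and a product of signs), while yours makes the group-theoretic mechanism explicit and would generalize more readily to other reflection arguments.
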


 For each $i \in \{ 1, \dots , 2n+1 \}$, let $m_i$ be the line joining $A_i$ and $A_{i+2n+1}$. Let $R_i$ be the be the union of the two  quarters defined by $m_i$ and $m_{i+1}$ that contain the sides $A_iA_{i+1}$ and $A_{i+2n+1}A_{i+2n+2}$, as in \Cref{fig:region_ri}, where the indices are taken modulo $2n+1$.
 Observe that, by \Cref{lm:quadrilateral}, the line $l_i$ is the set of points of $R_i$ that is at the same distance from $m_i$ and $m_{i+1}$. If $l_1 , \dots , l_{2n}$ are concurrent at a point $P$, then $P$ is at the same distance from all the lines $m_i$, so, in particular, it is at the same distance from $m_{2n+1}$ and $m_1$.
 The only remaining point to complete the proof (and the reason the statement is false for $4n$-gons) is given by the following lemma.
 
 \begin{figure}
 	\centering
 		\definecolor{ffzzqq}{rgb}{1.,0.6,0.}
	\definecolor{zzttqq}{rgb}{0.6,0.2,0.}
	\definecolor{uuuuuu}{rgb}{0.26666666666666666,0.26666666666666666,0.26666666666666666}
	\definecolor{ffwwqq}{rgb}{1.,0.4,0.}
	\definecolor{xdxdff}{rgb}{0.49019607843137253,0.49019607843137253,1.}
	\begin{tikzpicture}[line cap=round,line join=round,>=triangle 45,x=0.39919806595094903cm,y=0.39919806595094903cm]
	\clip(-1.9000097142316479,-7.32968481434277) rectangle (18.14016753698527,4.970106514446447);
	\fill[line width=2.pt,color=zzttqq,fill=zzttqq,fill opacity=0.10000000149011612] (6.079371388806722,3.829672372217824) -- (4.374759572558283,-0.6410193020411784) -- (13.546364440197735,-0.013189996996894493) -- cycle;
	\fill[line width=2.pt,color=zzttqq,fill=zzttqq,fill opacity=0.10000000149011612] (2.5127355809229748,-0.7684815341712308) -- (4.374759572558283,-0.6410193020411784) -- (3.097180558910262,-3.9917304757409093) -- cycle;
	\fill[line width=2.pt,color=zzttqq,fill=zzttqq,fill opacity=0.10000000149011612] (6.079371388806722,3.829672372217824) -- (13.546364440197735,-0.013189996996894493) -- (13.476037262076677,0.23536200172744692) -- (13.382482576357488,0.5132298959201813) -- (13.24884829438345,0.8452508120348661) -- (13.100939879506454,1.1553835290228065) -- (12.948076064849833,1.432745777217129) -- (12.720702247530516,1.7879761127019682) -- (12.48368283774907,2.1049574150653583) -- (12.192503415984184,2.439198853409576) -- (11.94534084163569,2.6855272270012414) -- (11.681938825161215,2.917247312847481) -- (11.369557185596198,3.1573350117853796) -- (11.00242891314841,3.3984962803887404) -- (10.454559553257353,3.688624992125037) -- (9.869403846641331,3.919588938063571) -- (9.60544935303871,3.999928250949023) -- (9.135208924798043,4.109045349969421) -- (8.322918931072957,4.200703439087432) -- (7.774912553908212,4.195720626277051) -- (7.170042544043759,4.12771837155703) -- (6.745715526114341,4.039662403829768) -- (6.388167867611173,3.9383235018540192) -- cycle;
	\fill[line width=2.pt,color=zzttqq,fill=zzttqq,fill opacity=0.10000000149011612] (2.5127355809229748,-0.7684815341712308) -- (2.4786562436329325,-1.2138583672496246) -- (2.482795283437949,-1.718347401921069) -- (2.5086101401135963,-2.0351092868421885) -- (2.539386931939245,-2.2694600092482013) -- (2.60664505905088,-2.6303931149630224) -- (2.6795800477505543,-2.9236780045129716) -- (2.7937683097892876,-3.2870579128145883) -- (2.9005898688843956,-3.5666565371415166) -- (3.097180558910262,-3.9917304757409093) -- cycle;
	\draw [line width=2.pt] (8.1,-1.42) circle (2.2455379113187535cm);
	\draw [line width=2.pt,color=ffwwqq,domain=-1.9000097142316479:18.14016753698527] plot(\x,{(-764.5980653353994--150.00249803487102*\x)/169.06386563493913});
	\draw [line width=2.pt] (3.097180558910262,-3.9917304757409093)-- (6.079371388806722,3.829672372217824);
	\draw [line width=2.pt] (2.5127355809229748,-0.7684815341712308)-- (13.546364440197735,-0.013189996996894493);
	\draw [line width=2.pt,color=ffzzqq] (2.539386931939245,-2.2694600092482013)-- (9.60544935303871,3.999928250949023);
	\begin{scriptsize}
	\draw [fill=xdxdff] (2.5127355809229748,-0.7684815341712308) circle (2.5pt);
	\draw[color=xdxdff] (1.5884655850542597,-0.395147289318505) node {$A_{i}$};
	\draw [fill=xdxdff] (3.097180558910262,-3.9917304757409093) circle (2.5pt);
	\draw[color=xdxdff] (1.970183246982201,-4.254736982145467) node {$A_{i+1}$};
	\draw [fill=xdxdff] (13.546364440197735,-0.013189996996894493) circle (2.5pt);
	\draw[color=xdxdff] (15,0.4319076448587009) node {$A_{i+2n+1}$};
	\draw [fill=xdxdff] (6.079371388806722,3.829672372217824) circle (2.5pt);
	\draw[color=xdxdff] (5,4.397530021554534) node {$A_{i+2n+2}$};
	\draw [fill=uuuuuu] (4.374759572558283,-0.6410193020411784) circle (2.0pt);
	\draw[color=black] (4.2,1.258962579035907) node {$m_{i+1}$};
	\draw[color=black] (9.031959992649114,-0.8828976351153186) node {$m_i$};
	\draw[color=ffzzqq] (6.317523285605977,0.4) node {$l_i$};
	\end{scriptsize}
	\end{tikzpicture}
 	\caption{The region $R_i$.} \label{fig:region_ri}
 \end{figure}
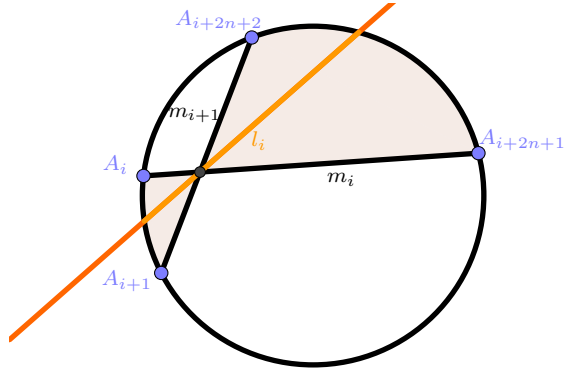
 
\begin{lemmeintro}
	If $P \in R_1 \cap \cdots \cap R_{2n}$, then $P \in R_1 \cap \cdots \cap R_{2n+1}$
\end{lemmeintro}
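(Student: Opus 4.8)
The plan is to translate each hypothesis ``$P\in R_i$'' into a statement about which side of the diagonals $P$ lies on, and then to finish with a parity count. For every index $j$ (from now on read modulo $2n+1$) the chord $m_j$ cuts $\Delta$ into two half-disks; let $U_j$ be the one bounded by the boundary arc $A_j A_{j+1}\cdots A_{j+2n+1}$ -- equivalently, the half-disk whose closure contains the side $A_j A_{j+1}$ -- and $U_j^{c}$ the other. Since the chords $m_j$ and $m_{j+2n+1}$ are the \emph{same line}, while the arcs $A_j\cdots A_{j+2n+1}$ and $A_{j+2n+1}\cdots A_{j}$ defining $U_j$ and $U_{j+2n+1}$ are complementary, one has
\[ U_{j+2n+1}=U_j^{c}. \]
The first step is to check, by locating the two endpoints of a side on the two relevant chords and reading off from the cyclic order of $A_1,\dots,A_{4n+2}$ which half-disk of the other chord the side enters, that for every $i$
\[ R_i=\big(\overline{U_i}\cap\overline{U_{i+1}^{c}}\big)\cup\big(\overline{U_i^{c}}\cap\overline{U_{i+1}}\big), \]
the first quarter being the one containing $A_i A_{i+1}$ and the second the opposite quarter, containing $A_{i+2n+1}A_{i+2n+2}$. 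The decisive consequence comes from specializing this to $i=2n+1$ and substituting $U_{2n+2}=U_1^{c}$:
\[ R_{2n+1}=\big(\overline{U_{2n+1}}\cap\overline{U_1}\big)\cup\big(\overline{U_{2n+1}^{c}}\cap\overline{U_1^{c}}\big), \]
so that, unlike every other $R_i$, the region $R_{2n+1}$ is the union of the two opposite quarters $\overline{U_{2n+1}}\cap\overline{U_1}$ and $\overline{U_{2n+1}^{c}}\cap\overline{U_1^{c}}$ rather than of the two quarters $\overline{U_{2n+1}}\cap\overline{U_1^{c}}$ and $\overline{U_{2n+1}^{c}}\cap\overline{U_1}$.

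Granting this, the lemma follows at once. We may assume $P$ lies on none of the $m_j$: in the situation where the lemma is applied $P$ is equidistant from all the $m_j$, so if it lay on one of them it would lie on all of them and trivially belong to every $R_i$. Set $\varepsilon_j=1$ if $P\in U_j$ and $\varepsilon_j=0$ otherwise. The displays for $R_i$ and for $R_{2n+1}$ then say precisely that $P\in R_i\iff\varepsilon_i\neq\varepsilon_{i+1}$ for $1\le i\le 2n$, whereas $P\in R_{2n+1}\iff\varepsilon_{2n+1}=\varepsilon_1$. Hence if $P\in R_1\cap\cdots\cap R_{2n}$ the sequence $\varepsilon_1,\varepsilon_2,\dots,\varepsilon_{2n+1}$ is alternating, and since it has odd length $2n+1$ its first and last terms coincide, $\varepsilon_{2n+1}=\varepsilon_1$, so $P\in R_{2n+1}$. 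The same bookkeeping makes transparent why the analogous statement fails for $4n$-gons: there one would obtain an alternating sequence of \emph{even} length, whose first and last terms differ, contradicting the wrap-around requirement that they be equal.

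The heart of the matter is the first step, and within it the wrap-around identity $U_{2n+2}=U_1^{c}$: one must fix a coherent ``positive side'' for each diagonal and verify that traversing the cyclic list $m_1,\dots,m_{2n+1},m_{2n+2}=m_1$ reverses it exactly once. This single reversal, together with the oddness of $2n+1$, is the entire content of the lemma -- and its absence is exactly why the statement is false for $4n$-gons. The remaining ingredients -- deciding which of the four quarters of two crossing chords contains a prescribed side of the polygon, and the parity argument itself -- are routine once the cyclic combinatorics of $A_1,\dots,A_{4n+2}$ has been recorded carefully; one must also be a little careful with the open/closed convention for the regions $R_i$ in order to set aside the degenerate positions of $P$ cleanly.
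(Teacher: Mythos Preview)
Your argument is correct and is essentially the same parity argument as the paper's: your indicators $\varepsilon_j\in\{0,1\}$ play exactly the role of the signs of the affine forms $m_j$ normalized by $m_j(A_{j+1})>0$, your observation that $R_i$ for $i\le 2n$ means $\varepsilon_i\neq\varepsilon_{i+1}$ while $R_{2n+1}$ means $\varepsilon_{2n+1}=\varepsilon_1$ is precisely the paper's remark that $R_i$ is given by $m_im_{i+1}<0$ for $i\le 2n$ but $R_{2n+1}$ by $m_{2n+1}m_1>0$, and your alternation-of-odd-length conclusion is the paper's ``multiply an even number of negative inequalities'' step. The only cosmetic difference is that the paper works with linear equations and products of signs where you work with half-disks and $0/1$ indicators.
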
 

\begin{proof}
	Consider Cartesian equations for the lines $m_i$, that we still denote by $m_i$, so $m_i(A_i) = m_i(A_{i+2n+1}) = 0$. Up to changing signs, we can suppose that $m_i(A_{i+1}) > 0$. 
	Thus, for each $i \in \{1, \dots ,2n \}$ the region $R_i$ is defined by the inequality $m_i m_{i+1} < 0$, but the region $R_{2n+1}$, bounded by $m_{2n+1}$ and $m_1$, is defined by $m_{2n+1}m_1 > 0$.
	Now, if $P \in R_1 \cap \cdots \cap R_{2n}$, then $m_1(P)m_2(P) < 0$, $\dots$ , $m_{2n}(P) m_{2n+1}(P) < 0$.
	By multiplying this even number of inequalities, we obtain
	$m_{2n+1}(P)m_1(P) > 0$, so $P \in R_{2n+1}$.
\end{proof}

\subsection*{Acknowledgement}

The authors are grateful to Arseniy Akopyan for pointing out reference \cite{richter-gebert_perspectives_2011}.

\bibliographystyle{plain}
\bibliography{ref_pascal.bib}

\end{document}